\newtheorem{theorem}{Theorem}[section]
\newtheorem{lemma}[theorem]{Lemma}
\newtheorem{proposition}[theorem]{Proposition}
\newtheorem{corollary}[theorem]{Corollary}
\newtheorem{remark}[theorem]{Remark}
\newtheorem{definition}[theorem]{Definition}
\DeclareMathOperator{\graph}{graph}
\DeclareMathOperator{\Div}{div}
\begin{document}

\title{Translators Asymptotic to Planes}

\author{Stephen Lynch and Giuseppe Tinaglia}
\address{King's College London, Mathematics, London, U.K.}

\begin{abstract} We prove that a vertical plane is the only complete translator, properly immersed in $\mathbb{R}^3$ and having finite topology, whose ends are asymptotic to vertical planes.
\end{abstract}
\maketitle

\section{Introduction}
Translators arise naturally in the study of mean curvature flow as self-similar solutions that move by translation. Their geometric and analytic properties have been studied extensively, see for instance~\cite[Chapter 13]{book-EGF}, mainly because of their role in understanding type-II singularities of mean curvature flow \cite{Hamilton, HS}.

Our main theorem classifies complete properly immersed translators with finite topology whose ends are asymptotic to vertical planes, see Definition~\ref{def: asymptotic}.

\begin{theorem}\label{uniqueness}
Let $\Sigma$ be a complete translator properly immersed in $\mathbb{R}^3$ with finite topology whose ends are asymptotic to vertical planes. Then $\Sigma$ is a vertical plane. 
\end{theorem}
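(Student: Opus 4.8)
The plan is to reduce everything to showing that the scalar mean curvature of $\Sigma$ (with respect to the unit normal $\nu$ normalised so that $H=\langle\nu,e_3\rangle$) vanishes identically. Once $H\equiv 0$ is known, the translator equation gives $\langle\nu,e_3\rangle\equiv 0$, so $e_3$ is everywhere tangent to $\Sigma$; the integral curves of the tangent field $e_3|_\Sigma$ are then vertical straight lines contained in $\Sigma$, and since $\Sigma$ is complete and connected this forces $\Sigma=\gamma\times\mathbb{R}e_3$ for a complete curve $\gamma$ in a horizontal plane. Finally $H\equiv 0$ says that the geodesic curvature of $\gamma$ vanishes, so $\gamma$ is a straight line and $\Sigma$ is a vertical plane. (Connectedness is used here; if $\Sigma$ were non-orientable one would first pass to the orientation double cover, which inherits all the hypotheses.)

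To prove $H\equiv 0$ I would use the pointwise identity $\Delta_\Sigma x_3=H^2$, valid on every translator: since $\Delta_\Sigma\vec{x}=\vec{H}=e_3^{\perp}$ (the normal part of $e_3$), one has $\Delta_\Sigma x_3=\langle e_3^{\perp},e_3\rangle=|e_3^{\perp}|^2=\langle\nu,e_3\rangle^2=H^2$; equivalently $\nabla^\Sigma x_3=e_3^{\top}$ and $\Div_\Sigma(e_3^{\top})=H^2$. Let $\phi_R=\phi(|x|)$ be a radial cutoff with $\phi_R\equiv 1$ on $B_R$, $\phi_R\equiv 0$ off $B_{2R}$, $|\phi_R'|\le C/R$, and $\phi_R$ increasing in $R$; because $\Sigma$ is properly immersed, $\phi_R$ has compact support on $\Sigma$, so
\[
\int_\Sigma\phi_R H^2\,d\mathcal{H}^2=\int_\Sigma\phi_R\,\Delta_\Sigma x_3\,d\mathcal{H}^2=-\int_\Sigma\langle\nabla^\Sigma\phi_R,\nabla^\Sigma x_3\rangle\,d\mathcal{H}^2=-\int_{\Sigma\cap(B_{2R}\setminus B_R)}\langle\nabla^\Sigma\phi_R,e_3\rangle\,d\mathcal{H}^2 ,
\]
using that $\nabla^\Sigma\phi_R$ is tangent to $\Sigma$. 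For $R$ large the domain of the last integral lies in the ends, so it suffices to show the contribution of each end tends to $0$. Here the hypothesis that each end $E_j$ is asymptotic to a \emph{vertical} plane $P_j$ is decisive: outside a compact set $E_j$ is a small graph over an exterior region $\Omega_j\subset P_j$, and since $P_j$ is vertical the field $e_3=\nabla^{P_j}x_3$ is constant and tangent along $P_j$, with $\langle\nabla^{P_j}\phi_R,e_3\rangle=\tfrac{\phi'(|p|)}{|p|}\,x_3$. For large $R$ the region $\Omega_j\cap(B_{2R}\setminus B_R)$ is $\{R<|p|<2R\}\cap P_j$, which is symmetric under the reflection $x_3\mapsto -x_3$ of $P_j$ (its centre has $x_3=0$ because $P_j$ is vertical), while the integrand $\tfrac{\phi'(|p|)}{|p|}x_3$ is odd under that reflection; hence the planar model contributes exactly $0$, and the true contribution of $E_j$ differs from it only by an error controlled by the rate at which $E_j$ converges to $P_j$. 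Letting $R\to\infty$ and using monotone convergence then yields $\int_\Sigma H^2=0$, so $H\equiv 0$.

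\textbf{Main obstacle.} The real work is the asymptotic analysis of the ends: one must know that the graph functions $u_j$ describing $E_j$ over $\Omega_j$, together with enough of their derivatives, decay fast enough that the error terms above (and the error in identifying $E_j$ with its planar model) genuinely vanish — and also that the end structure is as used in the first paragraph. I expect this to come from the translator equation itself: written for the graph of $u_j$ over the vertical plane $P_j$, its linearisation at $u_j=0$ is the drift-Laplace equation $\Delta_{P_j}u_j+\partial_{x_3}u_j=0$, whose bounded solutions that decay at infinity decay at a definite rate. Combining a barrier/maximum-principle argument with interior Schauder estimates should upgrade the bare convergence $u_j\to 0$ contained in ``asymptotic to a plane'' to the quantitative decay of $u_j$ and its derivatives needed here (and, with more care, may even force $u_j\equiv 0$ near infinity, after which unique continuation for the translator equation already identifies $\Sigma$ with $P_j$). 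Reconciling the admissible notion of ``asymptotic'' with the rigidity of the translator PDE is the step I expect to require the most effort; the global argument sketched above is comparatively soft.
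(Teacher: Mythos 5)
Your global scheme is, up to minor repackaging, the same as the paper's. The identity $\Div_\Sigma e_3^\top = |H|^2$ is exactly the paper's Lemma~\ref{lem: total mean curvature}; the paper then exhausts $\Sigma$ by compact regions bounded by circles $\Gamma_{i,R}$ at radius $R$ in the ends and passes this identity to the limit, whereas you use a cutoff $\phi_R$ and integrate by parts — these are equivalent up to where the integration by parts happens. Both routes then hinge on the same observation: the leading (planar) contribution to the flux of $e_3^\top$ through a circle of radius $R$ in a \emph{vertical} plane is odd in $x_3$ and integrates to zero (in the paper this appears concretely as $\int_0^{2\pi}R\sin\theta\,d\theta=0$), and what is left over is an error of size $\sim R\cdot|Du_i|^2$ on a circle of radius $R$.

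The genuine gap is exactly the one you label the ``main obstacle,'' and it cannot be waved away: the hypothesis ``asymptotic to a vertical plane'' gives only the qualitative statement $\|u_i\|_{C^1(\Pi_i\setminus B_R)}\to 0$, while your error term forces you to prove the \emph{quantitative} decay $\sup_{\Pi_i\setminus B_R}|Du_i|^2=o(R^{-1})$. You write that a barrier/maximum-principle argument and Schauder theory ``should upgrade'' the bare convergence, but you do not produce the estimate, and this is where all the analysis in the problem lives. The paper closes this gap with a Bernstein-type interior gradient estimate (Lemma~\ref{lem: grad est}): if $\Sigma=\graph u$ over a vertical plane and $|Du|^2\le\frac14$ on $\Pi\cap \overline{B_R(p)}$, then
\[
|Du(p)|^2\le \frac{400}{R}\max_{\Pi\cap\partial B_R(p)}u^2 .
\]
This is proved by applying the maximum principle to $G=\varphi^2|Du|^2/2+\Lambda u^2/2$ for the quasilinear graph operator of the translator equation $a^{ij}D_iD_ju+D_3u=0$ (not its linearisation), with $\Lambda\sim R^{-1}$; the drift term $D_3u$ is exactly what makes $L(u^2/2)$ nonnegative so that $\Lambda u^2$ can be used as a penaliser. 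Once you have this lemma, taking $R\to\infty$ and using $u_i\to 0$ immediately gives $|Du_i|^2=o(R^{-1})$, and your cutoff argument (or the paper's boundary-integral argument, Proposition~\ref{prop: strongly planar}) then closes. Your parenthetical speculation that decay might force $u_j\equiv 0$ near infinity, making unique continuation applicable, is more than what is needed and is not established; the flux argument is softer and suffices. Your final step — from $H\equiv 0$ to $\Sigma$ being a vertical plane, via $e_3$ being everywhere tangent and hence $\Sigma$ a minimal vertical cylinder — is correct.
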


We note that since $\Sigma$ is not assumed to be embedded in Theorem~\ref{uniqueness}, its asymptotic vertical planes need not be parallel. 

Our study was inspired by Khan's work on complete embedded translators with finite total curvature~\cite{Khan}, which, among other things, shows that such surfaces satisfy the hypotheses of Theorem~\ref{uniqueness}. We therefore have the following corollary of Theorem~\ref{uniqueness}. 

\begin{corollary}
Every complete translator embedded in $\mathbb{R}^3$ with finite total curvature is a vertical plane.
\end{corollary}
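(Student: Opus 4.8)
The plan is to deduce the Corollary directly from Theorem~\ref{uniqueness}: one simply checks that a complete embedded translator $\Sigma\subset\mathbb{R}^3$ with finite total curvature satisfies the two hypotheses of that theorem — finite topology and ends asymptotic to vertical planes — which is in essence the content of Khan's paper~\cite{Khan}. So the proof is a short reduction rather than a new argument.

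\emph{Step 1: finite topology and properness.} By the structure theory for surfaces of finite total curvature (Huber's theorem together with the refinements carried out in~\cite{Khan}), $\Sigma$ is conformally a closed Riemann surface with finitely many punctures, and near each puncture $\Sigma$ is an embedded annular end on which the second fundamental form decays. In particular $\Sigma$ has finite topology, and since each end is eventually a graph over a planar domain, the immersion is proper. Thus the two standing assumptions of Theorem~\ref{uniqueness} other than the asymptotic condition are met.

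\emph{Step 2: ends asymptotic to vertical planes.} On each end $|A|\to 0$, so in particular the mean curvature $H\to 0$ there. Since $\Sigma$ is a translator in the vertical direction $e_3$, it satisfies $H=\langle\nu,e_3\rangle$; hence $\langle\nu,e_3\rangle\to 0$ along the end, and the Gauss map converges to a point of the equator $\{x_3=0\}\cap\mathbb{S}^2$. Consequently each end has a well-defined limiting tangent plane, and that plane is vertical. The sharper conclusion — that outside a compact set the end is a graph over the exterior of a disk in this vertical plane, by a function converging to a constant, i.e. that the end is asymptotic to a vertical plane in the sense of Definition~\ref{def: asymptotic} — is exactly the asymptotic description of finite-total-curvature ends established in~\cite{Khan}. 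Feeding this into Theorem~\ref{uniqueness} forces $\Sigma$ to be a vertical plane; conversely a vertical plane has $A\equiv 0$ and so has finite (indeed zero) total curvature, showing the characterization is sharp.

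\emph{Main obstacle.} The only step that is not purely formal is the passage in Step~2 from ``the Gauss map converges to an equatorial point'' to the graphical decay required by Definition~\ref{def: asymptotic}; from the viewpoint of the present paper this is imported wholesale from~\cite{Khan}, after which Theorem~\ref{uniqueness} does the rest.
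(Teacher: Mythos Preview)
Your proposal is correct and follows exactly the paper's own approach: the corollary is stated as an immediate consequence of Theorem~\ref{uniqueness} once one invokes Khan's result~\cite{Khan} that complete embedded translators with finite total curvature have finite topology and ends asymptotic to vertical planes. The additional details you supply (Huber's theorem, the Gauss-map argument forcing the limiting plane to be vertical) are a helpful elaboration of what the paper simply attributes to~\cite{Khan}, but the logical structure is identical.
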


Our proof of Theorem~\ref{uniqueness} relies principally on an identity expressing the total mean curvature of any smooth compact translator as the flux of $e_3^\top$ across its boundary, see Lemma~\ref{lem: total mean curvature}.

\begin{remark}
After originally posting this article we learned of the work \cite{Neves--Tian}, by Neves and Tian, concerning the classification of Lagrangian translators. Some elements of our proof are similar to theirs---in particular, they derive the same flux formula for the total mean curvature and utilise it to prove flatness in their setting. 
\end{remark}

\textbf{Acknowledgements.} We are grateful to Felix Schulze for bringing to our attention the reference \cite{Neves--Tian}. 

\section{Preliminaries}

A translator $\Sigma \subset \mathbb{R}^3$ is a smooth surface whose mean curvature vector $H$ satisfies 
    \[H = e_3^\perp.\]
Equivalently, the family of surfaces $\Sigma + t e_3$ for $t \in \mathbb{R}$ is a solution to the mean curvature flow.

In the following lemma, we express the total mean curvature of any smooth compact translator as the flux of $e_3^\top$ across its boundary.

\begin{lemma}\label{lem: total mean curvature}
For a smooth compact translator $\Sigma$ with boundary $\partial\Sigma$ we have
    \begin{equation}\label{total mean curvature}
        \int_{\Sigma} |H|^2 = \int_{\partial \Sigma} \langle \eta, e_3^\top\rangle,
    \end{equation}
where $\eta$ is the outward conormal to $\partial \Sigma$.   
\end{lemma}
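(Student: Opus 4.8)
The plan is to compute the quantity $\int_\Sigma |H|^2$ by relating $H$ to the position vector field and applying the divergence theorem on the compact surface with boundary. Since $\Sigma$ is a translator, $H = e_3^\perp = e_3 - e_3^\top$, so $|H|^2 = \langle H, e_3\rangle = \langle e_3 - e_3^\top, e_3\rangle$. The key observation is that $e_3^\top$, the tangential part of the constant vector $e_3$, is a natural vector field on $\Sigma$ whose surface divergence picks up exactly the mean curvature term. Concretely, I would show that
\begin{equation*}
\Div_\Sigma(e_3^\top) = -\langle H, e_3\rangle = -|H|^2,
\end{equation*}
after which the divergence theorem on $\Sigma$ gives $\int_\Sigma |H|^2 = -\int_\Sigma \Div_\Sigma(e_3^\top) = -\int_{\partial\Sigma}\langle \eta, e_3^\top\rangle$ — and I will need to be careful about the sign convention relating $H = e_3^\perp$ to the scalar mean curvature, so that the final identity comes out as stated in~\eqref{total mean curvature}.

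The computation of $\Div_\Sigma(e_3^\top)$ is the technical heart. Working in a local orthonormal frame $\{\tau_1, \tau_2\}$ for $T\Sigma$ with normal $\nu$, I would write $e_3 = e_3^\top + \langle e_3, \nu\rangle \nu$ and compute $\Div_\Sigma(e_3^\top) = \sum_i \langle \nabla^\Sigma_{\tau_i} e_3^\top, \tau_i\rangle$. Since $e_3$ is parallel in $\mathbb{R}^3$, $\nabla^{\mathbb{R}^3}_{\tau_i} e_3 = 0$, so $\nabla^{\mathbb{R}^3}_{\tau_i} e_3^\top = -\nabla^{\mathbb{R}^3}_{\tau_i}(\langle e_3,\nu\rangle \nu)$; taking the tangential component and using the Weingarten relation $\nabla^{\mathbb{R}^3}_{\tau_i}\nu = -A(\tau_i)$ (shape operator), the terms involving $\tau_i(\langle e_3,\nu\rangle)\nu$ are purely normal and drop out, leaving $\langle \nabla^\Sigma_{\tau_i} e_3^\top, \tau_i\rangle = \langle e_3,\nu\rangle \langle A\tau_i, \tau_i\rangle$. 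Summing over $i$ yields $\Div_\Sigma(e_3^\top) = \langle e_3, \nu\rangle \cdot \tr A = \langle e_3, \nu \rangle \cdot (\text{scalar mean curvature})$, which equals $\pm\langle H, e_3\rangle$ depending on the orientation of $\nu$; pairing this with the translator equation $H = e_3^\perp$ gives $\Div_\Sigma(e_3^\top) = -|H|^2$ with the correct sign.

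I would then invoke the divergence theorem for the tangential vector field $e_3^\top$ on the compact manifold-with-boundary $\Sigma$: $\int_\Sigma \Div_\Sigma(X) = \int_{\partial\Sigma}\langle X, \eta\rangle$ for $X$ tangent to $\Sigma$ and $\eta$ the outward unit conormal along $\partial\Sigma$. Applying this with $X = e_3^\top$ and substituting $\Div_\Sigma(e_3^\top) = -|H|^2$ produces $-\int_\Sigma |H|^2 = \int_{\partial\Sigma}\langle e_3^\top, \eta\rangle$; since $\eta$ is itself tangent to $\Sigma$, $\langle e_3^\top, \eta\rangle = \langle e_3, \eta\rangle$, and a sign adjustment in the divergence identity (orienting $e_3^\top$ or $H$ consistently) delivers exactly~\eqref{total mean curvature}.

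The main obstacle I anticipate is purely bookkeeping: getting all the sign conventions consistent — the relation between the mean curvature vector $H$, the scalar mean curvature $\tr A$, the choice of unit normal $\nu$, and the orientation of the outward conormal $\eta$ — so that the final identity has no stray minus sign. The geometric content is the short computation $\Div_\Sigma(e_3^\top) = \langle e_3,\nu\rangle \tr A$, which is robust; the care is entirely in matching conventions to the statement as written. A secondary minor point is to note that no boundary condition on $\partial\Sigma$ is needed — the divergence theorem applies to any smooth compact translator with boundary — so the hypotheses in the lemma are exactly what is used.
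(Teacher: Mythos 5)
Your approach is the same as the paper's: compute the surface divergence of $e_3^\top$ and apply the divergence theorem. The paper's proof is a one-liner doing exactly this, via $\Div_\Sigma e_3^\top = -\Div_\Sigma e_3^\perp = \langle H, e_3^\perp\rangle = |H|^2$. So the strategy is right, and your intermediate computation $\Div_\Sigma(e_3^\top) = \langle e_3,\nu\rangle \tr A$ is correct.

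However, there is a genuine sign error in what you do next, and the way you plan to resolve it is not valid. You assert $\Div_\Sigma(e_3^\top) = -\langle H, e_3\rangle = -|H|^2$, claim the answer is ``$\pm\langle H,e_3\rangle$ depending on the orientation of $\nu$,'' and propose to fix the resulting stray minus sign by ``a sign adjustment in the divergence identity (orienting $e_3^\top$ or $H$ consistently).'' None of this holds up. The quantity $\langle e_3,\nu\rangle\tr A$ is \emph{invariant} under $\nu \mapsto -\nu$, since both factors flip sign; there is no orientation ambiguity to exploit. Writing $H = (\tr A)\,\nu$ (consistent with your Weingarten convention $\nabla_{\tau_i}\nu = -A\tau_i$), one gets $\langle e_3,\nu\rangle\tr A = \langle e_3,\nu\rangle\langle H,\nu\rangle = \langle e_3^\perp, H\rangle = |H|^2$, which is nonnegative. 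So the correct identity is $\Div_\Sigma(e_3^\top) = +|H|^2$, and the divergence theorem then gives \eqref{total mean curvature} directly, with no sign juggling. Also, the divergence theorem for tangential fields, $\int_\Sigma \Div_\Sigma X = \int_{\partial\Sigma}\langle X,\eta\rangle$ with $\eta$ the outward conormal, has no free sign; you cannot ``orient'' your way out of a wrong intermediate answer. Fix the single sign in relating $\tr A$ to $H$ and the proof is complete.
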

\begin{proof}
Integrate the identity 
    \[\Div_\Sigma e_3^\top = -\Div_\Sigma e_3^\perp = \langle H, e_3^\perp \rangle = |H|^2\]
and apply the divergence theorem. 
\end{proof}

Finally, we  define what it means for an end to be asymptotic to a vertical plane.

\begin{definition}\label{def: asymptotic}
Let $\Sigma_i$ be an end of a translator $\Sigma$, i.e. a component of $\Sigma \setminus B_r(0)$ for some sufficiently large radius $r$. We say that $\Sigma_i$ is asymptotic to a vertical plane if there exists a vertical plane $\Pi_i$ such that the following holds:
   
$\Sigma_i = \graph u_i$ for a function $u_i : \Pi_i \setminus \Omega_i \to \mathbb{R}$, where $\Omega_i \subset  \Pi_i$ is open and bounded, and $\|u_i\|_{C^1(\Pi_i \setminus B_R(0))} \to 0$ as $R \to \infty$. 
\end{definition}

\section{Proof of Theorem~\ref{uniqueness}}
To prove Theorem~\ref{uniqueness} we produce an exhaustion of $\Sigma$ by smooth, compact regions such that the boundary integral in \eqref{total mean curvature} converges to zero. This implies that $H$ vanishes identically and hence $\Sigma$ is a vertical plane.

We first prove a variant of Theorem~\ref{uniqueness} which imposes a slightly different hypothesis on the ends. 

\begin{proposition}\label{prop: strongly planar}
Let $\Sigma$ be a complete translator properly immersed in $\mathbb{R}^3$. Suppose there exists a radius $r > 0$ such that:
    \begin{itemize}
        \item $\Sigma \setminus B_r(0)$ has finitely many components, denoted $\Sigma_i$;
        \item each component $\Sigma_i = \graph u_i$ for a function $u_i : \Pi_i \setminus \Omega_i \to \mathbb{R}$, where $\Omega_i \subset  \Pi_i$ is open and bounded, and   
        \begin{equation}\label{eq:strong decay}
        \lim_{R \to \infty} \int_{\Pi_i \, \cap \, \partial B_R(p_i)} |Du_i|^2 = 0
        \end{equation}
        for some $p_i \in \Pi_i$. 
    \end{itemize}
Then $\Sigma$ is a vertical plane. 
\end{proposition}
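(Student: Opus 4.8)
The plan is to apply Lemma~\ref{lem: total mean curvature} to an exhaustion of $\Sigma$ by compact regions $\Sigma \cap \overline{B_R(0)}$ (for regular values $R$ of the distance function, so the boundary is a smooth curve), and show that the flux $\int_{\partial(\Sigma \cap \overline{B_R(0)})} \langle \eta, e_3^\top\rangle$ tends to zero as $R \to \infty$. Since $\int_\Sigma |H|^2 \ge 0$ and the left side is monotone nondecreasing in $R$, this forces $\int_\Sigma |H|^2 = 0$, hence $H \equiv 0$, so $\Sigma$ is a minimal translator, i.e. $e_3^\perp = 0$ everywhere, which means $e_3$ is tangent to $\Sigma$ at every point. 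Combined with the structure of the ends (graphs over vertical planes $\Pi_i$) this will force $\Sigma$ to be a single vertical plane.

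\medskip

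Here is the order I would carry this out. First, by properness, for large $R$ the only contributions to $\partial(\Sigma \cap \overline{B_R(0)})$ come from the ends $\Sigma_i$, so it suffices to bound $\int_{\Sigma_i \cap \partial B_R} |\langle \eta, e_3^\top\rangle|$ for each $i$. Second, estimate the conormal integrand pointwise: $|\langle \eta, e_3^\top\rangle| \le |e_3^\top| \le 1$ always, but more importantly $|e_3^\top| = |e_3 - e_3^\perp| $ and $|e_3^\perp| = |H|$; for a graph $\Sigma_i = \graph u_i$ over $\Pi_i$, writing coordinates $(x,y)$ on $\Pi_i$ with the plane vertical (so $e_3$ lies in $\Pi_i$, say $e_3$ is the $y$-direction), the unit normal to the graph is $\nu = (-D u_i, 1)/\sqrt{1 + |Du_i|^2}$ and one computes $|e_3^\perp|^2 = \langle e_3, \nu\rangle^2 = (\partial_y u_i)^2/(1 + |Du_i|^2) \le |Du_i|^2$. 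Third — the key reduction — rewrite the boundary integral over $\Sigma_i \cap \partial B_R(0)$ as an integral over a curve in $\Pi_i$ close to $\Pi_i \cap \partial B_R(p_i)$: since $\|u_i\|_{C^1} \to 0$, the set $\Sigma_i \cap \partial B_R(0)$ projects to a curve in $\Pi_i$ that is $C^1$-close to the circle $\Pi_i \cap \partial B_R(p_i)$ (for a suitable choice of $p_i$ reflecting the offset of the plane from the origin), the arclength elements are comparable up to $(1 + o(1))$ factors, and the Jacobian of the graph map is $\sqrt{1+|Du_i|^2} = 1 + o(1)$. Hence
\[
\int_{\Sigma_i \cap \partial B_R(0)} |\langle \eta, e_3^\top\rangle| \;\le\; C \int_{\Sigma_i \cap \partial B_R(0)} |Du_i| \;\le\; C \int_{\Pi_i \cap \partial B_{R'}(p_i)} |Du_i|\,ds
\]
for $R'$ comparable to $R$. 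Fourth, apply Cauchy--Schwarz on the circle of radius $\approx R$:
\[
\int_{\Pi_i \cap \partial B_{R'}(p_i)} |Du_i| \;\le\; \Big(2\pi R'\Big)^{1/2}\Big(\int_{\Pi_i \cap \partial B_{R'}(p_i)} |Du_i|^2\Big)^{1/2}.
\]
This is where hypothesis~\eqref{eq:strong decay} enters — but I notice that Cauchy--Schwarz alone gives a factor $R^{1/2}$ which is \emph{not} obviously killed by \eqref{eq:strong decay}, so one cannot simply take a single $R \to \infty$ limit. The fix is the standard trick of integrating over an annulus: for each large $\rho$ choose, via the coarea formula and the mean value theorem, a radius $R \in [\rho, 2\rho]$ with $R \int_{\Pi_i \cap \partial B_R(p_i)} |Du_i|^2 \le \frac{1}{\log 2}\int_{\{\rho \le |x-p_i| \le 2\rho\} \cap \Pi_i} \frac{|Du_i|^2}{|x - p_i|}\,dx$; then the flux along $\partial B_R$ is controlled by $\big(\int_{\text{annulus}} |Du_i|^2/|x-p_i|\,dx\big)^{1/2}$, and this tends to $0$ as $\rho \to \infty$ because \eqref{eq:strong decay} makes $\int_{\Pi_i \cap \partial B_s(p_i)}|Du_i|^2$ a function tending to $0$, so the weighted annular integral is $o(\log 2)$. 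Thus along a sequence $R_k \to \infty$ the total flux goes to $0$, which is all we need.

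\medskip

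Finally, with $H \equiv 0$ established, $\Sigma$ is a complete properly immersed minimal surface that is also a translator, so $e_3$ is everywhere tangent; each end $\Sigma_i$ is then both minimal and a graph over $\Pi_i$ with $C^1$-small $u_i$, and minimality plus the translator equation (now $\Delta_\Sigma u = 0$ type) together with the decay forces $u_i$ to be constant on each end — indeed a $C^1$-asymptotically-flat minimal graph over a plane with a graph function having $\int_{\partial B_R}|Du|^2 \to 0$ must be flat, by a Liouville-type argument (the graph is a minimal surface asymptotic to a plane at infinity, hence by the removable singularity / end-analysis for minimal surfaces it is planar). Then $\Sigma$ is a complete minimal surface all of whose ends are flat vertical planes and containing the direction $e_3$ everywhere; a connectedness/analyticity argument (or: $\Sigma$ is properly immersed, connected, minimal, with one planar end — apply the monotonicity formula / Hoffman--Meeks-type reasoning, or simply note that a complete connected minimal surface containing an open planar piece is a plane) shows $\Sigma$ is that single vertical plane.

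\medskip

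I expect the main obstacle to be Step four: the naive Cauchy--Schwarz estimate loses a factor $R^{1/2}$, so one must pass to annular averages and use that the tail integral $\int_{\Pi_i\cap\partial B_s(p_i)}|Du_i|^2$ genuinely decays (not merely that $\int_\text{annulus}|Du_i|^2 < \infty$) to extract a subsequence of radii along which the flux vanishes; getting the bookkeeping right — choosing $p_i$ so the Euclidean spheres $\partial B_R(0)$ and the intrinsic circles on $\Pi_i$ are comparable, and controlling the error between $\Sigma_i \cap \partial B_R(0)$ and its projection — is the technical heart. The final "flat end $\Rightarrow$ plane" step is comparatively soft but does require invoking a uniqueness/rigidity statement for minimal ends asymptotic to a plane.
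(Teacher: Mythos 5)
Your opening strategy (exhaust $\Sigma$, drive the boundary flux from Lemma~\ref{lem: total mean curvature} to zero, conclude $H\equiv 0$) is the same as the paper's, but the way you try to control the boundary integral contains a genuine error, and the fix you propose for it does not work.

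The gap is in Step~2/3: the claimed pointwise bound $|\langle \eta, e_3^\top\rangle|\le C\,|Du_i|$ is false. What you actually establish is $|e_3^\perp|^2\le |Du_i|^2$, i.e.\ $|H|$ is controlled by $|Du_i|$, which says $|e_3^\top|=\sqrt{1-|H|^2}$ is \emph{close to $1$}, not small. Since $\eta$ is a unit vector, $\langle\eta,e_3^\top\rangle$ is generically of order $1$. Concretely, parametrizing the circle $\Pi_i\cap\partial B_R(p_i)$ by $\theta$ and letting $\gamma$ be its graph, one computes
\begin{equation*}
\langle\gamma'(\theta)\times\nu,e_3\rangle = R\sin\theta + R\cdot O\bigl(|Du_i(R,\theta)|^2\bigr),
\end{equation*}
so after dividing by $|\gamma'|\approx R$ the conormal integrand is $\sin\theta + O(|Du_i|^2)$. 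This is nowhere near $O(|Du_i|)$. Consequently $\int|\langle\eta,e_3^\top\rangle|$ along the circle is of order $R$ and does not vanish; what \emph{does} vanish is the signed integral $\int\langle\eta,e_3^\top\rangle$, because the leading term $R\sin\theta$ integrates to zero over $[0,2\pi]$ by parity. That cancellation is the crux of the argument and is invisible to any estimate based on $|\langle\eta,e_3^\top\rangle|$. Once you keep the sign, the remaining error is $\int_0^{2\pi} R\,O(|Du_i|^2)\,d\theta$, which is \emph{exactly} the quantity hypothesis~\eqref{eq:strong decay} kills — no Cauchy--Schwarz, no annular averaging, no passage to a subsequence is needed.

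Your proposed repair in Step~4 also fails on its own terms: from $\int_{\Pi_i\cap\partial B_s(p_i)}|Du_i|^2=o(1)$ the mean value theorem over an annulus $[\rho,2\rho]$ only reproduces $\int_{\partial B_R}|Du_i|^2=o(1)$ for some $R$; it does not upgrade this to $R\int_{\partial B_R}|Du_i|^2=o(1)$, which is what would be needed to beat the $R^{1/2}$ from Cauchy--Schwarz. So even granting the (incorrect) pointwise bound, the subsequence argument would not close. A second, smaller point: the paper avoids your technical headache about comparing $\Sigma_i\cap\partial B_R(0)$ with a circle in $\Pi_i$ by simply \emph{defining} the boundary curves $\Gamma_{i,R}$ to be the graph images of the circles $\Pi_i\cap\partial B_R(p_i)$; these still bound a compact region for large $R$ by properness, and the flux computation is then exact rather than up to $(1+o(1))$ errors. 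Finally, the concluding ``$H\equiv 0\Rightarrow$ vertical plane'' step is much softer than the Liouville/end-analysis machinery you invoke: $H=e_3^\perp=0$ means $e_3$ is tangent everywhere, so $\Sigma$ is a cylinder over a planar curve, and minimality forces that curve to be a line.
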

\begin{proof}
For each $\Sigma_i$, up to an ambient isometry we may assume $\Pi_i = \{x_1 = 0\}$ and $p_i = 0$. We introduce polar coordinates for $\Pi_i$, conflating $(0, r\cos\theta,r\sin\theta)$ with $(r,\theta)$. We may then express points in $\Sigma_i$ as $(u_i(r,\theta), r, \theta)$. For all sufficiently large $R$ we define a map
    \[\gamma_{i,R}(\theta) := (u_i(R, \theta), R, \theta),\]
giving rise to a smoothly embedded closed curve
    \[\Gamma_{i,R} := \gamma_{i,R}(S^1)\subset \Sigma_i.\]
Since there are only finitely many components $\Sigma_i$ and $\Sigma$ is properly immersed, for each large $R$ the collection of curves $\Gamma_{i,R}$ bounds a compact region in $\Sigma$, which we denote by $\Sigma_R$. We will demonstrate that 
    \[\lim_{R \to \infty} \int_{\Gamma_{i,R}} \langle\eta,e_3^\top\rangle =  0\]
for each $i$, where $\eta$ is the outward unit conormal to $\partial \Sigma_R$. The claim then follows, since Lemma~\ref{lem: total mean curvature} implies that 
    \[\lim_{R\to\infty} \int_{\Sigma_R} |H|^2 = 0\]
and hence $H$ vanishes everywhere, meaning $\Sigma$ can only be a vertical plane. 

For the following computation let $i$ and $R$ be fixed and to ease notation let us write $\gamma$ instead of $\gamma_{i,R}$, $\Gamma$ instead of $\Gamma_{i, R}$, etc. The outward conormal to $\Gamma$ is  
    \[\eta(R,\theta) = \frac{\gamma'(\theta)}{|\gamma'(\theta)|} \times \nu(R, \theta).\]
where $\nu$ is the upward unit normal to $\Sigma$, given by
    \[\nu = \frac{e_1 - Du}{\sqrt{1+|Du|^2}}.\]
We therefore have
    \begin{align*}
    \int_{\Gamma} \langle \eta, e_3^\top\rangle &= \int_0^{2\pi} \left\langle \frac{\gamma'(\theta)}{|\gamma'(\theta)|} \times \nu(R, \theta), e_3\right\rangle |\gamma'(\theta)|\,d\theta\\
    &= \int_0^{2\pi} \langle \gamma'(\theta) \times \nu(R, \theta), e_3\rangle\,d\theta.
    \end{align*}
Evaluating the integrand at $(R,\theta)$ gives
    \begin{align*} 
    \langle \gamma'(\theta) \times \nu, e_3\rangle &= \langle\gamma'(\theta),e_1\rangle \langle\nu,e_2\rangle - \langle\gamma'(\theta),e_2\rangle \langle\nu, e_1\rangle\\
    &= \frac{1}{\sqrt{1+|Du|^2}}(R\sin\theta|D_2 u|^2 - R\cos\theta D_3 u D_2 u + R\sin\theta)\\
    &= R\sin\theta + R \cdot O(|Du(R, \theta)|^2),
    \end{align*}
and hence we have
    \begin{align*}
    \int_{\Gamma} \langle \eta, e_3^\top\rangle &= \int_0^{2\pi} R\sin\theta\,d\theta + \int_0^{2\pi} R\cdot O(|Du(R, \theta)|^2)\,d\theta\\
    &= \int_0^{2\pi} R\cdot O(|Du(R, \theta)|^2) \,d\theta.
    \end{align*}
The hypothesis \eqref{eq:strong decay}, expressed in polar coordinates, ensures that the right hand side tends to zero as $R \to \infty$. This completes the proof. 
\end{proof}

Our aim now is to prove that any end which is asymptotic to a vertical plane automatically satisfies \eqref{eq:strong decay}. This will make use of the following interior gradient estimate, which is obtained by a standard application of Bernstein's technique.\footnote{We note that Gama, Mart\'{i}n and M{\o}ller have independently obtained the same estimate by different arguments---it is implied by \cite[Proposition~3.10]{GMM} via a standard interpolation inequality.}

\begin{lemma}\label{lem: grad est}
Let $\Sigma$ be a translator of the form $\graph u$, where $u : \Pi\cap \overline{B_R(p)} \to \mathbb{R}$, $\Pi$ is a vertical plane containing the point $p$, and we assume $R \geq 1$. If $|Du|^2 \leq \frac{1}{4}$ holds everywhere in $\Pi\cap \overline{B_R(p)}$ then we have 
    \[|Du(p)|^2 \leq \frac{400}{R}\max_{\Pi \,\cap \,\partial B_R(p)} u^2.\]
\end{lemma}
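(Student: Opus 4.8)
The plan is to run the classical Bernstein gradient-estimate argument adapted to the translator equation, working on the vertical plane $\Pi$ viewed as a flat domain in $\mathbb{R}^2$. Writing $\Sigma = \graph u$ over $\Pi \cap \overline{B_R(p)}$, the translator equation $H = e_3^\perp$ becomes a quasilinear elliptic PDE for $u$ of the form $a^{ij}(Du) D_{ij} u = b(Du)$, where $a^{ij}$ is uniformly elliptic on the region $\{|Du|^2 \le \tfrac14\}$ and $b$ is smooth with $b(0)=0$ (the translator term contributes a bounded lower-order piece). I would first record this equation and the ellipticity constants explicitly, then differentiate it to get an equation satisfied by the gradient function $v := \tfrac12|Du|^2$; the key structural fact, as in the minimal surface case, is that $v$ is a subsolution of a linear elliptic operator $L$ (in divergence or non-divergence form) built from the linearization, modulo terms controlled by $v$ and $|D^2u|$ which can be absorbed using the Cauchy--Schwarz/Kato-type inequality $|Dv|^2 \le 2v|D^2u|^2$.

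Next I would introduce a cutoff. Let $\varphi$ be a standard logarithmic or polynomial cutoff on $B_R(p)$ with $\varphi \equiv 1$ near $p$, $\varphi \equiv 0$ on $\partial B_R(p)$, and $|D\varphi| \lesssim R^{-1/2}\varphi^{1/2}$-type bounds (or $|D\varphi|^2/\varphi \lesssim R^{-1}$), and test the subsolution inequality for $v$ against $\varphi^2 v$ or use the maximum principle on the auxiliary function $w := \varphi^2 v + \Lambda (u - \inf u)^2$ for a suitable constant $\Lambda$; the precise packaging that yields the clean constant $400$ and the $R^{-1}$ scaling is the one where the oscillation of $u$ enters squared. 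Evaluating at the interior maximum of $w$ and using that $u^2$ is itself a subsolution up to lower-order terms (here one uses $\Sc$-type manipulations specific to translators: $\Delta_\Sigma u = \langle H, e_3\rangle = |H|^2 \langle \text{stuff}\rangle$, or more simply that the height function on a translator satisfies a nice equation) gives a pointwise bound $v(p) \lesssim R^{-1}\max_{\partial B_R(p)} u^2$ once the cutoff gradient terms are estimated. The scaling $R^{-1}$ (rather than $R^{-2}$) is exactly what one expects since $u$ itself, not $u/R$, appears: differentiating once costs one power of length.

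The main obstacle is handling the inhomogeneous translator term correctly in the Bernstein computation: unlike the minimal surface equation, the equation for $v$ is not simply $Lv \ge 0$ but $Lv \ge -(\text{linear in } v) - (\text{terms from differentiating } b)$, and one must check that on the good set $\{|Du|^2 \le \tfrac14\}$ all these error terms have a sign or can be absorbed into the dominant $c(n)|Dv|^2/v$ term coming from the Kato inequality, with constants explicit enough to produce $400$. A secondary technical point is justifying that the maximum of the auxiliary function $w$ is attained in the interior (it vanishes on $\partial B_R(p)$ up to the $u^2$ boundary term, which is why that term is added) and that no boundary or non-smoothness issues arise since $u$ is assumed smooth on the closed ball. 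Once the structure is set up, the remaining estimates are the routine calculus of cutoffs and I would not expect surprises there; I would simply track constants carefully, using $R \ge 1$ to convert any stray $R^{-2}$ into $R^{-1}$.
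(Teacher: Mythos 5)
Your proposal follows essentially the same route as the paper: a Bernstein computation for $\tfrac12|Du|^2$ using the non-divergence operator $L = a^{ij}D_iD_j + D_3$ coming from the translator equation, combined with the maximum principle applied to the auxiliary function $\varphi^2\tfrac12|Du|^2 + \Lambda\tfrac12 u^2$, with the cutoff terms absorbed using $R\ge 1$ to trade $R^{-2}$ for $R^{-1}$. The details you flag as potential obstacles (the first-order translator term differentiated through, the cutoff bookkeeping, and $u^2$ acting as a subsolution) all work out exactly as you anticipate, so this is the paper's argument.
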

\begin{proof}
Up to an ambient isometry we may assume $\Pi = \{x^1 = 0\}$. A unit normal to $\Sigma$ is given by 
    \[\nu := \frac{e_1-Du}{\sqrt{1+|Du|^2}},\]
and we have
    \[\langle H,\nu\rangle = \bigg(\delta^{ij} - \frac{D^i u D^j u}{1+|Du|^2}\bigg)\frac{D_i D_j u}{\sqrt{1+|Du|^2}} =: a^{ij} \frac{D_i D_j u}{\sqrt{1+|Du|^2}}\]
and 
    \[\langle e_3, \nu\rangle = - \frac{D_3 u}{\sqrt{1+|Du|^2}}.\]
Therefore, the translator condition for $\Sigma$ is equivalent to 
    \[a^{ij} D_i D_j u + D_3 u = 0.\]
Here and throughout the proof indices range over $\{2,3\}$. 

Let us write $L := a^{ij}D_i D_j + D_3$. We then have
    \begin{align*}
    L \frac{u^2}{2} = a^{ij} D_i u D_j u =\frac{|Du|^2}{1+|Du|^2}.
    \end{align*}
and, using the Cauchy--Schwarz inequality,
    \begin{align*}
    L\frac{|Du|^2}{2} &=-D^k u D_k a^{ij} D_i D_j u + a^{ij} D_i D^k u D_j D_k u \geq \bigg(1 - 5\frac{|Du|^2}{1+|Du|^2}\bigg)|D^2 u|^2.
    \end{align*}
Let $\varphi \geq 0$ be a cutoff function and $\Lambda > 0$ a constant, both to be chosen later. From the above computations we see that the auxiliary quantity $G := \varphi^2 \frac{|Du|^2}{2} + \Lambda \frac{u^2}{2}$ satisfies
    \begin{align*}
    L G &= \frac{|Du|^2}{2}L\varphi^2 + a^{ij} D_i \varphi^2 D_j |Du|^2 + \varphi^2 L \frac{|Du|^2}{2} + \Lambda L\frac{u^2}{2}\\
    &\geq \frac{|Du|^2}{2} L \varphi^2 + a^{ij} D_i \varphi^2 D_j |Du|^2\\
    &\qquad + \bigg(1 - 5\frac{|Du|^2}{1+|Du|^2}\bigg)\varphi^2|D^2 u|^2 + \frac{\Lambda|Du|^2}{1+|Du|^2}.
    \end{align*}
Using $|a| \leq 2$ we estimate
    \[L\varphi^2 \geq -4 \varphi |D^2 \varphi| - 4|D\varphi|^2 - 2 \varphi|D_3\varphi|\]
and 
    \begin{align*}
    a^{ij} D_i \varphi^2 D_j |Du|^2 &\geq -8\varphi|D\varphi||Du||D^2 u| \geq - \frac{1}{2}\varphi^2|D^2 u|^2- 32|D\varphi|^2|Du|^2,
    \end{align*}
leading to  
    \begin{align*}
    LG &\geq -|Du|^2(2 \varphi |D^2 \varphi| + 34|D\varphi|^2 + \varphi|D_3\varphi|) \\
    &\qquad + \bigg(\frac{1}{2}-5\frac{|Du|^2}{1+|Du|^2}\bigg)\varphi^2 |D^2 u|^2 + \frac{\Lambda|Du|^2}{1+|Du|^2}.
    \end{align*}
Let $\varphi$ be chosen so that $\varphi(p)=1$, $\varphi \equiv 0$ on $\partial B_R(p)$, $0 \leq \varphi \leq 1$ and 
    \[R|D\varphi| \leq 2, \qquad  R^2|D^2\varphi| \leq 8.\]
In addition, let $\Lambda = 400R^{-1}$. Inserting these choices above and using $R \geq 1$, we finally arrive at 
    \begin{align*}
    L G &\geq \bigg(\frac{400}{1+|Du|^2}-200\Bigg)R^{-1}|Du|^2 + \bigg(\frac{1}{2}-5\frac{|Du|^2}{1+|Du|^2}\bigg)\varphi^2 |D^2 u|^2.
    \end{align*}
The right-hand side is nonnegative in $B_R(p)$ since we are assuming $|Du|^2 \leq \frac{1}{4}$, so by the maximum principle
    \begin{align*}
    \frac{|Du(p)|^2}{2} \leq \max_{B_{R}(p)} G = \max_{\partial B_R(p)} G = \frac{400}{R} \max_{\partial B_R
    (p)} \frac{u^2}{2}. 
    \end{align*}
\end{proof}

We now complete the proof of our main theorem.

\begin{proof}[Proof of Theorem~\ref{uniqueness}]
Let $\Sigma \subset \mathbb{R}^3$ be a complete translator satisfying the hypotheses of Theorem~\ref{uniqueness}. We proceed by verifying that each one of the finitely many annular ends satisfies equation \eqref{eq:strong decay}, so that the claim follows from Proposition~\ref{prop: strongly planar}. 

Consider an annular end $\Sigma_i = \graph u_i$ and fix a point $p_i \in \Pi_i$ for reference. Since, by hypothesis, we have 
    \[\sup_{\Pi_i \setminus B_R(p_i)} |Du_i| \to 0 \qquad \text{as} \qquad R \to \infty,\]
we certainly have $|Du_i| \leq 1/4$ in the region $\Pi_i \setminus B_{R/2}(p_i)$ whenever $R$ is sufficiently large. Therefore, given any $q \in \Pi_i \setminus B_R(p_i)$, by Lemma~\ref{lem: grad est} we have 
    \[|Du_i(q)|^2 \leq \frac{800}{R} \max_{\partial B_{R/2}(q)}u_i^2\]
for sufficently large $R$. Since $q \in \Pi_i \setminus B_{R}(p_i)$ was chosen arbitrarily we conclude that 
    \[\sup_{\Pi_i \setminus B_R(p_i)} |Du_i|^2 \leq \frac{800}{R} \sup_{\Pi_i \setminus B_{R/2}(p_i)} u_i^2\]
for sufficiently large $R$. But we also assume 
    \[\sup_{\Pi_i \setminus B_{R/2}(p_i)} u_i^2 \to 0 \qquad \text{as} \qquad R \to \infty,\]
so this shows that 
    \[\sup_{\Pi_i \setminus B_R(p_i)} |Du_i|^2 \leq o(R^{-1})\]
and hence 
    \[\int_{\Pi_i \, \cap \, \partial B_R(p_i)} |Du_i|^2 = \int_0^{2\pi} R|Du_i|^2 \, d\theta = o(1)\]
as $R \to \infty$, which is \eqref{eq:strong decay}. 
\end{proof}

\bibliographystyle{plain}
\bibliography{references}

\end{document}